\theoremstyle{plain}
\newtheorem{theo}{Theorem}[section]
\newtheorem{lemm}[theo]{Lemma}
\newtheorem{coro}[theo]{Corollary}
\theoremstyle{definition}
\newtheorem*{defi}{Definition}
\theoremstyle{remark}
\numberwithin{equation}{section}
\newcommand{\field}[1]{\mathbb{#1}}
\newcommand{\C}{\field{C}}
\newcommand{\R}{\field{R}}
\newcommand{\Z}{\field{Z}}
\newcommand{\N}{\field{N}}
\newcommand{\1}{\mathbf 1}
\DeclareMathOperator{\Hom}{Hom}
\def\S1{\C^*}
\def\1{\mathbf 1}
\begin{document}

\title
[Real generalized Bott manifolds of height $2$]
{Cohomological non-rigidity of generalized real Bott manifolds of height $2$}
\date{\today}

\author[M.~Masuda]{Mikiya Masuda}
\address{Department of Mathematics, Osaka City University, Sugimoto, 
Sumiyoshi-ku, Osaka 558-8585, Japan}
\email{masuda@sci.osaka-cu.ac.jp}

\thanks{The author was partially supported by Grant-in-Aid for
Scientific Research 19204007}
\subjclass[2000]{Primary 57R91; Secondary 14M25}
\keywords{real toric manifold, cohomological rigidity, generalized real Bott manifold of height 2.}

\begin{abstract}
We investigate when two generalized real Bott manifolds 
of height $2$ have isomorphic cohomology rings with $\Z/2$ coefficients
and also when they are diffeomorphic.  It turns out that cohomology 
rings with $\Z/2$ coefficients do not distinguish 
those manifolds up to diffeomorphism in general.  This gives a counterexample 
to the cohomological rigidity problem for real toric manifolds posed in \cite{ka-ma08}. 
We also prove that generalized real Bott manifolds of height 2 are 
diffeomorphic if they are homotopy equivalent. 
\end{abstract}

\maketitle

\section{Introduction}

A toric manifold is a compact smooth toric variety and a real toric manifold 
is the set of real points of a toric manifold.  In \cite{ma-su08} we asked
whether toric manifolds are diffeomorphic if their cohomology 
rings with $\Z$ coefficients are isomorphic as graded rings, which is now 
called {\it cohomological rigidity problem for toric manifolds}.  
No counterexample and some partial affirmative 
solutions are known to the problem (see \cite{ch-ma-su08-2}, \cite{ma-su08}). 
If $X$ is a toric manifold and $X(\R)$ is the real toric manifold associated 
to $X$, then $H^*(X(\R);\Z/2)$ is isomorphic to $H^{2*}(X;\Z)\otimes\Z/2$ 
as graded rings.  
Motivated by this, we posed in \cite{ka-ma08} the following analogous problem. 

\medskip
\noindent
{\bf Cohomological rigidity problem for real toric manifolds.} 
Are two real toric manifolds are diffeomorphic if their cohomology rings with 
$\Z/2$ coefficients are isomorphic as graded rings?   

\medskip
We say that 
{\it cohomological rigidity over $\Z/2$} holds for 
a family of closed smooth manifolds 
if the manifolds in the family are distinguished up to diffeomorphism by their 
cohomology rings with $\Z/2$ coefficients. 
A real Bott manifold is the total space of an iterated $\R P^1$ bundles 
where each $\R P^1$ bundle is the projectivization of a Whitney sum of 
two real line bundles.  A real Bott manifold is not only a real toric manifold but also 
a compact flat riemannian manifold.  We proved in \cite{ka-ma08} (and \cite{masu08}) 
that cohomological rigidity over $\Z/2$ holds for 
the family of real Bott manifolds.  

In this paper we consider real toric manifolds obtained as the total spaces 
of projectivization of Whitney sums of real line bundles over a real projective 
space.  We call such a real toric manifold a {\it generalized real Bott 
manifold of height 2}.  In this paper 
we will investigate when those two manifolds have isomorphic cohomology rings 
with $\Z/2$ coefficients and also when they are diffeomorphic. 
As a result, we will see that cohomological rigidity over $\Z/2$ 
fails to hold for some family of generalized real Bott 
manifolds of height 2, which gives 
a negative answer to the cohomological rigidity problem for real toric 
manifolds above. 
We also prove that generalized real Bott manifolds of height 2 are 
diffeomorphic if they are homotopy equivalent. 

The author thanks Y. Nishimura for pointing out 
a mistake in an earlier version of this paper and T. Panov for his comments.

\section{Cohomological condition}

Let $a,b$ be positive integers  and we fix them. 
Let $\gamma$ be the tautological line bundle over $\R P^{a-1}$
and let $\1$ denote a trivial real line bundle over an appropriate space.  
For a real vector bundle $E$, we denote by $P(E)$ the total space 
of the projectivization of $E$.  
For an integer $q$ such that $0\le q\le b$,  
we  set 
$$M(q):=P(q\gamma\oplus (b-q)\1).$$
Note that 
\begin{equation} \label{qb}
M(q)\text{ is diffeomorphic to } M(b-q)
\end{equation}
because $P(E\otimes L)$ and $P(E)$ are diffeomorphic 
for any smooth vector bundle $E$ and line bundle $L$ over a smooth manifold.  


A simple computation shows that 
\begin{equation} \label{HM}
H^*(M(q);\Z/2)=\Z/2[x,y]/(x^a,(x+y)^qy^{b-q})
\end{equation}
where $x$ is the pullback of the first Stiefel-Whitney class 
of $\gamma$ to $M(q)$ and 
$y$ is the first Stiefel-Whitney class of the 
tautological line bundle over $M(q)$. 
One easily sees that a set $\{x^iy^j\mid 0\le i< a,\ 0\le j< b\}$ 
is an additive basis of $H^*(M(q);\Z/2)$. 

\begin{lemm} \label{1}
If $0<q<b$, then both $y^a$ and $(x+y)^a$ are non-zero.
\end{lemm}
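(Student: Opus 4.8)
The plan is to reduce everything to a single assertion, namely that $y^a\neq 0$ in $R_q:=H^*(M(q);\Z/2)$ for \emph{every} $0<q<b$, and then to recover the statement about $(x+y)^a$ by symmetry. Indeed, the substitution $x\mapsto x,\ y\mapsto x+y$ is an automorphism of $\Z/2[x,y]$, and since $x+(x+y)=y$ in characteristic $2$, it carries the ideal $(x^a,(x+y)^qy^{b-q})$ onto $(x^a,\,y^q(x+y)^{b-q})$. Hence it induces an isomorphism $\phi\colon R_q\xrightarrow{\ \sim\ }R_{b-q}$ (the cohomological shadow of the diffeomorphism \eqref{qb}), and $\phi$ sends $(x+y)^a$ to $y^a$. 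Therefore $(x+y)^a\neq 0$ in $R_q$ is equivalent to $y^a\neq 0$ in $R_{b-q}$; once the core claim is proved for all admissible parameters, applying it with $b-q$ in place of $q$ yields $(x+y)^a\neq 0$ in $R_q$ as well.

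To prove $y^a\neq 0$, first note that if $a\le b-1$ then $y^a$ is itself one of the basis monomials $x^iy^j$ and there is nothing to do, so I would assume $a\ge b$. Set $P(t):=(t+x)^qt^{b-q}=\sum_{k=0}^{q}\binom{q}{k}x^kt^{b-k}$, a polynomial that is monic of degree $b$ in $t$ over $\Z/2[x]$, so that by \eqref{HM} one has $R_q\cong\big(\Z/2[x]/(x^a)\big)[y]/(P(y))$. The normal form of $y^a$ with respect to the module basis $1,y,\dots,y^{b-1}$ is then $r(y)$, where $r(t)$ is the remainder of $t^a$ on division by $P(t)$ performed over $\Z/2[x]$ and afterward reduced modulo $x^a$. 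Because $P$ is homogeneous of degree $b$ and $t^a$ is homogeneous of degree $a$ (with $\deg x=\deg t=1$), the remainder is homogeneous of degree $a$, so $r(t)=\sum_{j=0}^{b-1}c_j\,x^{a-j}t^j$ with scalars $c_j\in\Z/2$.

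The heart of the matter is to show that $c_j=1$ for some $j\ge 1$. Since $r(t)$ is homogeneous, the $c_j$ are precisely the coefficients of $\rho(t):=t^a\bmod (t+1)^qt^{b-q}$ computed in $\Z/2[t]$; this specialization at $x=1$ is legitimate because division by a monic polynomial commutes with the ring map $x\mapsto 1$. Now $\rho\neq 0$, for otherwise $(t+1)^qt^{b-q}$ would divide $t^a$, which is impossible as $(t+1)^q$ is coprime to $t$ and $q\ge 1$. Moreover $\rho(0)=0$, since both $t^a$ and $(t+1)^qt^{b-q}$ vanish at $t=0$ (using $a\ge 1$ and $b-q\ge 1$). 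Hence $\rho$ has a nonzero coefficient in some degree $j$ with $1\le j\le b-1$.

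To finish, reducing modulo $x^a$ annihilates only the $j=0$ term $c_0x^a$ and leaves $y^a=\sum_{j=1}^{b-1}c_j\,x^{a-j}y^j$. For $1\le j\le b-1$ and $a\ge b$, each exponent pair satisfies $0\le a-j<a$ and $0\le j<b$, so these $x^{a-j}y^j$ are distinct basis monomials of $R_q$; since at least one $c_j$ equals $1$, the sum is nonzero, and thus $y^a\neq 0$. I expect the only delicate step to be the passage from the two–variable remainder to the single–variable polynomial $\rho$ via homogeneity; once that is granted, the nonvanishing rests on the transparent fact that $(t+1)^q\nmid t^a$, and the remaining bookkeeping (that the surviving monomials are standard and linearly independent) is routine.
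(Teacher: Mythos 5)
Your proof is correct. The symmetry step reducing $(x+y)^a$ to $y^a$ (for the parameter $b-q$) is exactly the paper's substitution $X=x$, $Y=x+y$, just phrased as an induced isomorphism $R_q\cong R_{b-q}$. For the core claim $y^a\neq 0$, however, you take a genuinely different and more computational route: the paper argues by contradiction, writing a hypothetical relation $y^a=cx^a$ (for $a<b$) or $y^a=dx^a+f(x,y)(x+y)^qy^{b-q}$ (for $a\ge b$, the shape forced by homogeneity) and killing it with the substitutions $y=0$ and $y=x$, whereas you compute the actual normal form of $y^a$ in the basis $\{x^iy^j\}$ by Euclidean division of $t^a$ by the monic polynomial $(t+x)^qt^{b-q}$ over $\Z/2[x]$, and detect a surviving basis monomial by specializing at $x=1$ and observing that $(t+1)^q\nmid t^a$ while the remainder has zero constant term. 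The two obstructions are the same in substance --- your coprimality of $t+1$ and $t$ is the paper's substitution $y=x$, and your $\rho(0)=0$ plays the role of its substitution $y=0$ --- but your version yields the explicit expansion $y^a=\sum_{j\ge 1}c_j x^{a-j}y^j$ as a byproduct, at the cost of some (correctly handled) bookkeeping about homogeneity of the remainder and compatibility of monic division with the specialization $x\mapsto 1$. Both arguments ultimately rest on the additive basis $\{x^iy^j\mid 0\le i<a,\ 0\le j<b\}$ asserted after \eqref{HM}.
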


\begin{proof}
Suppose $y^a=0$.  Then it follows from \eqref{HM} that there are constants 
$c,d\in \Z/2$ and a homogeneous polynomial $f(x,y)$ in $x,y$ over $\Z/2$ 
such that 
\[
y^a=\begin{cases} cx^a \quad&\text{if $a< b$}\\
dx^a+f(x,y)(x+y)^qy^{b-q} \quad&\text{if $a\ge b$}
\end{cases}
\]
as polynomials in $x,y$.  
Clearly the former does not occur and the latter also does not occur 
because $q>0$ by assumption.  This is a contradiction, so $y^a\not=0$. 

If we set $X=x$ and $Y=x+y$, then $x+y=Y$ and $y=X+Y$, 
so that the role of $x$ and $x+y$ will be interchanged. 
Since $b-q>0$ by assumption, the above argument applied to 
$Y$ instead of $y$ proves that $(x+y)^{a}\not=0$.
\end{proof}

\begin{defi}
$h(a):=\min\{ n\in\N\cup \{0\}\mid 2^n\ge a\}.$
\end{defi}

For example, 
\[
\begin{split}
&h(1)=0,\ h(2)=1,\ h(3)=h(4)=2,\ h(5)=h(6)=h(7)=h(8)=3,\\ 
&h(9)=\dots=h(16)=4,\ \dots.
\end{split}
\]

\begin{lemm} \label{2}
Let $q$ and $q'$ be non-negative integers.  Then 
$\binom{q'}{i}\equiv \binom{q}{i} \pmod 2$ for $0\le \forall i < a$ if and only if 
$q'\equiv q \pmod{2^{h(a)}}$, where $\binom{n}{m}$ is understood to be $0$ 
when $n<m$ as usual. 
\end{lemm}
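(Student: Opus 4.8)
The plan is to translate the statement into the language of binary expansions via Lucas' theorem, or equivalently the Frobenius identity $(1+t)^{2^j}\equiv 1+t^{2^j}\pmod 2$ in the polynomial ring $\Z/2[t]$. Writing $q=\sum_{j\ge 0}q_j2^j$ with $q_j\in\{0,1\}$, Lucas' theorem gives $\binom{q}{i}\equiv\prod_j\binom{q_j}{i_j}\pmod 2$, where $i=\sum_j i_j2^j$, so that $\binom{q}{i}$ is odd precisely when the binary support of $i$ is contained in that of $q$. Equivalently, the residues $\binom{q}{i}\bmod 2$ are exactly the coefficients of $(1+t)^q=\prod_{j}(1+t^{2^j})^{q_j}$ in $\Z/2[t]$, and so the hypothesis ``$\binom{q'}{i}\equiv\binom{q}{i}\pmod 2$ for all $0\le i<a$'' is the assertion that $(1+t)^{q'}\equiv(1+t)^q\pmod{t^a}$.

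Set $h:=h(a)$, so that by definition $2^{h-1}<a\le 2^{h}$, where for $a=1$ we have $h=0$ and there are no low digits to consider. This two-sided bound is the crux of the argument, and I expect pinning it down to be the only real subtlety: the upper bound $a\le 2^h$ forces the relevant coefficients to depend only on the low $h$ binary digits $q_0,\dots,q_{h-1}$ of $q$, while the lower bound $2^{h-1}<a$ guarantees that each of those low digits is individually detectable by an admissible index $i<a$.

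For the ``if'' direction, suppose $q'\equiv q\pmod{2^h}$, so that $q$ and $q'$ share the digits $q_0,\dots,q_{h-1}$. Modulo $t^a$ every factor $(1+t^{2^j})^{q_j}$ with $j\ge h$ reduces to $1$, since $2^j\ge 2^h\ge a$; because reduction mod $t^a$ is a ring homomorphism, $(1+t)^q\equiv\prod_{j=0}^{h-1}(1+t^{2^j})^{q_j}\pmod{t^a}$, an expression depending only on $q_0,\dots,q_{h-1}$. The identical computation for $q'$ yields the same product, whence $\binom{q'}{i}\equiv\binom{q}{i}\pmod 2$ for all $0\le i<a$.

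For the converse I argue by contraposition. If $q'\not\equiv q\pmod{2^h}$, then $q_j\ne q'_j$ for some $0\le j\le h-1$. Taking $i=2^j$, Lucas' theorem gives $\binom{q}{2^j}\equiv q_j$ and $\binom{q'}{2^j}\equiv q'_j\pmod 2$, so these residues differ; and since $2^j\le 2^{h-1}<a$, the index $i=2^j$ satisfies $0\le i<a$, contradicting the hypothesis. Hence the hypothesis forces $q'\equiv q\pmod{2^h}$. The definition of $h(a)$ is precisely what makes both bounds hold at once, so that no index outside $\{0,\dots,a-1\}$ is ever needed and none of the $h$ low digits is ever missed, which is exactly why $2^{h(a)}$ is the correct modulus.
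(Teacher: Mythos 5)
Your proof is correct, and while the ``if'' direction coincides with the paper's argument, the ``only if'' direction takes a genuinely different route. For ``if'', both you and the paper rest on the Frobenius identity $(1+t)^{2^j}=1+t^{2^j}$ in $\Z/2[t]$ together with the inequality $2^{h(a)}\ge a$; the paper phrases it multiplicatively, writing $q'-q=2^{h(a)}R$ and computing $(1+t)^{q'-q}=(1+t^{2^{h(a)}})^R\equiv 1\pmod{t^a}$, whereas you factor $(1+t)^q$ digit by digit and discard the factors indexed by $j\ge h(a)$ --- the same idea in different clothing. For ``only if'', the paper runs an induction on $a$: it first gets $q'\equiv q\pmod{2^{h(a-1)}}$ from the inductive hypothesis, and in the only nontrivial case $a-1=2^s$ it expands $(1+t)^{q'-q}=(1+t^{2^s})^Q$ and reads off from the coefficient of $t^{2^s}$ that $Q$ must be even. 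You instead argue by contraposition via Lucas' theorem, producing a single explicit witness: if the $j$-th binary digits of $q$ and $q'$ differ for some $j<h(a)$, then $i=2^j$ satisfies $i\le 2^{h(a)-1}<a$ and $\binom{q}{2^j}\equiv q_j\not\equiv q'_j\equiv\binom{q'}{2^j}\pmod 2$. Your version is shorter, avoids the case split on whether $a-1$ is a power of $2$, and makes transparent why $2^{h(a)}$ is exactly the right modulus (the indices $i<a$ detect precisely the digits $q_0,\dots,q_{h(a)-1}$ and nothing more); the trade-off is that it invokes Lucas' theorem, which the paper's self-contained induction does not need. Your treatment of the degenerate case $a=1$, $h(a)=0$ is also correct.
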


\begin{proof}
When $q'=q$, the lemma is trivial. We may assume that 
$q'> q$ without loss of generality. 
We note that the former congruence relations in the lemma are equivalent to the following 
congruence relation of polynomials in $t$ with $\Z/2$ coefficients:
\begin{equation} \label{cong}
(1+t)^{q'-q}\equiv 1 \pmod{t^a}.
\end{equation}

We shall prove the \lq\lq if" part first.  Suppose $q'\equiv q \pmod{2^{h(a)}}$. 
Then $q'-q=2^{h(a)}R$ with some positive integer $R$ and 
the left hand side of \eqref{cong} turns into 
\[
(1+t)^{q'-q}=(1+t^{2^{h(a)}})^R\equiv 1 \pmod{t^a}
\]
where the last congruence relation holds because $2^{h(a)}\ge a$.   This verifies  
\eqref{cong}. 

We shall prove the \lq\lq only if" part by induction on $a$. 
When $a=1$, $2^{h(a)}=1$ and hence the congruence relation $q'\equiv q\pmod{2^{h(a)}}$ trivially holds. 
Suppose that the induction assumption is satisfied for $a-1$ with $a\ge 2$ and that \eqref{cong} holds for $a$. 
Then \eqref{cong} holds for $a-1$, so the induction assumption implies 
$q'\equiv q \pmod{2^{h(a-1)}}$.  When $a-1$ is not a power of 2, 
$h(a-1)=h(a)$; so the congruence relation $q'\equiv q\pmod{2^{h(a)}}$ holds for $a$. 
When $a-1$ is a power of 2, say $2^s$, 
$$\text{$h(a-1)=s$,\quad  $h(a)=s+1$}$$
and $q'-q=2^sQ$ with some positive integer $Q$ because $q'\equiv q \pmod{2^{h(a-1)}}$. 
Therefore 
\[
(1+t)^{q'-q}=(1+t^{2^s})^Q=1+Qt^{2^s}+\text{higher degree terms}.
\]
Since this is congruent to $1$ modulo $t^a$ and $a>2^s=a-1$, $Q$ must be even. 
This shows that $q'\equiv q\pmod{2^{s+1}}$, proving the induction assumption for $a$ 
because $s+1=h(a)$. This completes the induction step and proves the 
\lq\lq only if" part. 
\end{proof}

\begin{theo} \label{main}
Let $0\le q,q'\le b$. Then 
$H^*(M(q);\Z/2)$ and $H^*(M(q');\Z/2)$ are isomorphic as graded rings if and only if
$q'\equiv q \text{ or } b-q \pmod{2^{h(a)}}$. 
\end{theo}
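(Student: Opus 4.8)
The plan is to prove both implications by reducing to the behaviour of the identity-on-generators map together with the binomial congruences supplied by Lemma \ref{2}. Throughout I write $x',y'$ for the standard generators of $H^*(M(q');\Z/2)$.

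For the \emph{if} direction, first observe that by \eqref{qb} the manifolds $M(q)$ and $M(b-q)$ are diffeomorphic, so their cohomology rings are isomorphic; this lets me replace the congruence $q'\equiv b-q$ by $q'\equiv q$. It therefore suffices to treat $q'\equiv q\pmod{2^{h(a)}}$. Reducing modulo $x^a$ one has
\[
(x+y)^qy^{b-q}\equiv\sum_{i=0}^{a-1}\binom{q}{i}x^iy^{b-i}\pmod{x^a},
\]
and the same for $q'$. By Lemma \ref{2} the congruence $q'\equiv q\pmod{2^{h(a)}}$ gives $\binom{q'}{i}\equiv\binom{q}{i}$ for $0\le i<a$, so the two relation polynomials agree modulo $x^a$. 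Hence the ideals $(x^a,(x+y)^qy^{b-q})$ and $(x^a,(x+y)^{q'}y^{b-q'})$ coincide, and the identity map on $\Z/2[x,y]$ induces the desired graded ring isomorphism.

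For the \emph{only if} direction, suppose $\phi$ is a graded ring isomorphism. Since $\phi$ preserves degree, it restricts to a linear isomorphism between the two-dimensional degree-one parts $\langle x,y\rangle$ and $\langle x',y'\rangle$, and it preserves the vanishing of $a$-th powers. Assume first $0<q,q'<b$. By Lemma \ref{1} the elements $y,x+y$ (resp. $y',x'+y'$) have non-zero $a$-th power while $x^a=0$ (resp. $(x')^a=0$), so $x$ is the \emph{unique} non-zero degree-one class killed by the $a$-th power; hence $\phi(x)=x'$ and $\phi(y)\in\{y',x'+y'\}$. If $\phi(y)=y'$, then well-definedness of $\phi$ forces $(x'+y')^q(y')^{b-q}=0$ in $H^*(M(q');\Z/2)$; expanding modulo $(x')^a$, using the relation $(x'+y')^{q'}(y')^{b-q'}=0$ to rewrite $(y')^b$, and comparing coefficients against the additive basis $\{(x')^i(y')^j\}$ yields $\binom{q'}{i}\equiv\binom{q}{i}$ for all $0\le i<a$, whence $q'\equiv q\pmod{2^{h(a)}}$ by Lemma \ref{2}. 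If instead $\phi(y)=x'+y'$, then $\phi(x+y)=y'$, and rewriting $M(q)$ via $X=x$, $Y=x+y$ presents it (by \eqref{qb}) as $M(b-q)$; the previous case then gives $q'\equiv b-q\pmod{2^{h(a)}}$.

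It remains to treat the boundary cases where $q$ or $q'$ lies in $\{0,b\}$, where $M(q)=\R P^{a-1}\times\R P^{b-1}$ and Lemma \ref{1} no longer applies. Here the invariant I would use is the number of non-zero degree-one classes $\ell$ with $\ell^a=0$, which $\phi$ must preserve: a short computation in $\Z/2[x,y]/(x^a,y^b)$ shows this number is $1$ when $a<b$ and at least $2$ when $a\ge b$, so for $a\ge b$ no isomorphism to an interior $M(q')$ can exist, consistently with the fact that no $0<q'<b$ then satisfies the congruence. When $a<b$ the count matches that of the interior case and the argument above applies verbatim with $q=0$ (where $\binom{0}{i}=\delta_{i0}$), giving $q'\equiv 0$ or $b\pmod{2^{h(a)}}$; the case $q,q'\in\{0,b\}$ is immediate since both manifolds coincide. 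I expect the main obstacle to be the coefficient bookkeeping in the interior case --- in particular justifying that, after using the relation to eliminate $(y')^b$, the surviving monomials $(x')^i(y')^{b-i}$ $(1\le i<a)$ are genuinely independent basis elements --- together with the careful separate handling of the product case, rather than any conceptual difficulty.
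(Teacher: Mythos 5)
Your proof is correct and follows essentially the same route as the paper: Lemma~\ref{1} pins down the image of $x$, the defining relation is pushed through the isomorphism, and comparing coefficients of the monomials $x^iy^{b-i}$ reduces everything to the binomial congruences of Lemma~\ref{2}. The only difference is organizational: the paper avoids your separate boundary-case analysis by assuming (WLOG) $0<q<b$ and directing the isomorphism \emph{into} $H^*(M(q);\Z/2)$, so that Lemma~\ref{1} applies in the target ring even when $q'\in\{0,b\}$, whereas your count of degree-one classes with vanishing $a$-th power achieves the same end with slightly more work.
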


\begin{proof}
If both $q$ and $q'$ are in $\{0,b\}$, then the theorem is trivial.  
So we may assume $0<q<b$ without loss of generality.
We denote by $x'$ and $y'$ the generators in $H^*(M(q');\Z/2)$ corresponding to
$x$ and $y$. 

The \lq\lq if" part easily follows from \eqref{HM} and Lemma~\ref{2}.  
We shall prove the \lq\lq only if" part. 
Suppose that there is an isomorphism $$\varphi\colon H^*(M(q');\Z/2)\to H^*(M(q);\Z/2)$$ 
as graded rings. 
Since $\varphi(x')^a=\varphi({x'}^a)=0$, $\varphi(x')$ is neither $y$ nor $x+y$ 
by Lemma~\ref{1}.  Therefore $\varphi(x')=x$ and hence $\varphi(y')=y$ or $x+y$. 
Suppose $\varphi(y')=y$.  (When $\varphi(y')=x+y$, the role of 
$q$ and $b-q$ will be interchanged.)  Then $(x'+y')^{q'}{y'}^{b-q'}$ maps to 
$(x+y)^{q'}y^{b-q'}$ by $\varphi$ and it is zero in $H^*(M(q);\Z/2)$, so there 
are constants $c,d\in \Z/2$ and 
a homogeneous polynomial $f(x,y)$ in $x,y$ over $\Z/2$ such that 
\[
(x+y)^{q'}y^{b-q'}=\begin{cases} c(x+y)^qy^{b-q} \quad&\text{in case $a>b$}\\
d(x+y)^qy^{b-q}+f(x,y)x^a\quad&\text{in case $a\le b$}
\end{cases}
\]
as polynomials in $x,y$.  Clearly $c$ is non-zero, so $c=1$.  
Therefore $q'=q$ in case $a>b$. 
If $d=0$, then the right-hand side of the identity above in case $a\le b$ 
is divisible by $x$ as $a\ge 1$ 
while the left-hand side is not.  Therefore $d=1$ and 
the identity above in case $a\le b$ implies 
the former congruence relations in Lemma~\ref{2} by comparing the coefficients 
of $x^iy^{b-i}$ for $i<a$; so $q'\equiv q \pmod{2^{h(a)}}$ by Lemma~\ref{2}. 
\end{proof}

\begin{coro} \label{coro1}
Cohomological rigidity over $\Z/2$ holds for $M(q)$'s if $b\le 2^{h(a)}$. 
\end{coro}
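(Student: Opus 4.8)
The plan is to reduce the corollary to a short arithmetic observation, combining Theorem~\ref{main} with the diffeomorphism recorded in \eqref{qb}. First I would unwind the definition of cohomological rigidity over $\Z/2$: the task is to show that whenever $0\le q,q'\le b$ with $b\le 2^{h(a)}$ and $H^*(M(q);\Z/2)\cong H^*(M(q');\Z/2)$ as graded rings, the manifolds $M(q)$ and $M(q')$ are diffeomorphic. By Theorem~\ref{main} the cohomological hypothesis is equivalent to the disjunction $q'\equiv q\pmod{2^{h(a)}}$ or $q'\equiv b-q\pmod{2^{h(a)}}$, so it suffices to examine these two congruences under the range constraint $b\le 2^{h(a)}$.

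For the first congruence, $2^{h(a)}$ divides $q'-q$, and $q'-q$ lies in $[-b,b]\subseteq[-2^{h(a)},2^{h(a)}]$; hence $q'-q\in\{-2^{h(a)},0,2^{h(a)}\}$. The value $0$ forces $q'=q$, so $M(q')=M(q)$, while the nonzero values can occur only when $b=2^{h(a)}$, in which case $\{q,q'\}=\{0,b\}$ and therefore $q'=b-q$. The second congruence is handled in exactly the same spirit: $q'+q-b$ is a multiple of $2^{h(a)}$ lying in $[-b,b]$, so it is $0$ (giving $q'=b-q$) or $\pm 2^{h(a)}$; a one-line check shows the latter again forces $b=2^{h(a)}$ and $q=q'\in\{0,b\}$.

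Putting the cases together, each hypothesis yields either $q'=q$ or $q'=b-q$. In the first situation the manifolds literally coincide, and in the second \eqref{qb} says $M(q')=M(b-q)$ is diffeomorphic to $M(q)$. This is precisely the assertion of the corollary.

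The only delicate point — and the step I expect to be the main (mild) obstacle — is the boundary case $b=2^{h(a)}$, where the two parameters are allowed to differ by exactly $\pm 2^{h(a)}$ rather than to coincide. There one must verify that each such extremal possibility still collapses into $q'=q$ or $q'=b-q$, ensuring that no genuinely distinct, non-diffeomorphic manifold escapes the dichotomy supplied by \eqref{qb}.
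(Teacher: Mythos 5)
Your proposal is correct and follows the paper's proof exactly: apply Theorem~\ref{main} to get the congruence, use $b\le 2^{h(a)}$ to upgrade it to the equality $q'=q$ or $q'=b-q$, and conclude with \eqref{qb}. The paper states the middle step without elaboration, whereas you verify the boundary case $b=2^{h(a)}$ explicitly; your case analysis there is accurate.
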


\begin{proof} 
Suppose that $M(q)$ and $M(q')$ have isomorphic cohomology rings with 
$\Z/2$ coefficients. Then $q$ and $q'$ must satisfy the congruence 
relation in Theorem~\ref{main}.  But since $b\le 2^{h(a)}$, the congruence implies that $q'=q$ or $b-q$. 
This together with \eqref{qb} shows that $M({q'})$ is diffeomorphic to $M(q)$.  
\end{proof}

\section{KO theoretical condition}

In this section, we use KO theory to deduce a necessary and sufficient condition on $q$ and $q'$ 
for $M(q)$ and $M(q')$ to be diffeomorphic. 
We begin with a general lemma.  

\begin{lemm} \label{talong}
Let $E\to X$ be a real smooth vector bundle over a smooth manifold $X$. 
Let $\pi\colon P(E)\to X$ be the associated real projective bundle and let 
$\eta$ be the tautological real line bundle over $P(E)$.  
Then the tangent bundle $\tau P(E)$ of $P(E)$ with $\1$ added is isomorphic to 
$\Hom(\eta,\pi^*(E))\oplus \pi^*(\tau X)$.
\end{lemm}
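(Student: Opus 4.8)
The plan is to split the tangent bundle of $P(E)$ into its vertical and horizontal parts and then identify the vertical part by the real Euler sequence. First I would consider the differential of the projection $\pi\colon P(E)\to X$. This gives a short exact sequence of real vector bundles over $P(E)$,
\[
0\to \tau_\pi \to \tau P(E)\xrightarrow{d\pi} \pi^*(\tau X)\to 0,
\]
where $\tau_\pi$ denotes the bundle tangent along the fibers of $\pi$, i.e. the kernel of $d\pi$. Since we work over $\R$, choosing a Riemannian metric on $P(E)$ splits this sequence, so
\[
\tau P(E)\cong \tau_\pi\oplus \pi^*(\tau X).
\]

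Next, the crux of the argument is to identify the vertical bundle $\tau_\pi$. The fiber of $\pi$ over a point $x\in X$ is the real projective space $P(E_x)$, and at a line $\ell\subset E_x$ the tangent space $T_{[\ell]}P(E_x)$ is canonically isomorphic to $\Hom(\ell,E_x/\ell)$. Assembling these canonical fiberwise identifications over all of $P(E)$ yields the real Euler sequence and the isomorphism
\[
\tau_\pi\cong \Hom(\eta,\pi^*E/\eta).
\]
I expect this step to be the main technical point: one must check that the pointwise isomorphism $T_{[\ell]}P(E_x)\cong\Hom(\ell,E_x/\ell)$ is natural enough to glue into a smooth global bundle isomorphism.

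Finally I would convert this into the stated form by purely formal manipulations. Adding a trivial line bundle and using the canonical trivialization $\Hom(\eta,\eta)\cong\1$, furnished by the identity endomorphism of the line bundle $\eta$, gives
\[
\tau_\pi\oplus\1\cong \Hom(\eta,\pi^*E/\eta)\oplus\Hom(\eta,\eta)\cong \Hom\bigl(\eta,(\pi^*E/\eta)\oplus\eta\bigr).
\]
A Riemannian metric on $E$ splits $\pi^*E$ as $\eta\oplus\eta^{\perp}$ with $\eta^{\perp}\cong\pi^*E/\eta$, so the right-hand side is $\Hom(\eta,\pi^*E)$. Combining this with the splitting from the first step,
\[
\tau P(E)\oplus\1\cong(\tau_\pi\oplus\1)\oplus\pi^*(\tau X)\cong \Hom(\eta,\pi^*E)\oplus\pi^*(\tau X),
\]
which is the desired isomorphism. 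Apart from the Euler-sequence identification, every step is routine bundle algebra together with the splitting of short exact sequences of real bundles via a metric.
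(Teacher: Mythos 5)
Your proposal is correct and follows essentially the same route as the paper: split off $\pi^*(\tau X)$, identify the vertical bundle as a $\Hom$ bundle, and absorb the added trivial summand as $\Hom(\eta,\eta)$. The only difference is in the key step you flag but do not carry out — the paper constructs the isomorphism $\Hom(\eta,\eta^\perp)\cong\tau_f P(E)$ explicitly via the double cover $S(E)\to P(E)$, sending $\psi$ over a line $\ell$ to $\psi(v)$ for a unit vector $v\in\ell$ and using $\psi(-v)=-\psi(v)$ to descend, which is exactly the gluing check your Euler-sequence identification requires.
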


\begin{proof}
A point $\ell$ of $P(E)$ is a line in $E$ and the fibers of $\eta$ 
over $\ell$ are vectors in the line $\ell$, so $\eta$ is a subbundle of $\pi^*(E)$.  
We give a fiber metric on $E$.  
It induces a fiber metric on $\pi^*(E)$ and we denote by $\eta^\perp$ 
the orthogonal complement of $\eta$ in $\pi^*(E)$. 
Then $\tau_f P(E)$ the tangent bundle along the fiber of 
$\pi\colon P(E)\to X$ is isomorphic to $\Hom(\eta,\eta^\perp)$.  
This is proved in \cite[Lemma 4.4]{mi-st74} when $X$ is a point and the same argument 
works for any $X$.  In fact, the argument is as follows. 
We note that the unit $S^0$ bundle $S(\eta)$ of $\eta$ can naturally be identified with 
the unit sphere bundle $S(E)$ of $E$.  
Let $v\in S(\eta)$ be in the fiber over $\ell\in P(E)$, that is, $v$ is a vector in 
the line $\ell$ with unit length.  
To an element $\psi\in \Hom(\eta,\eta^\perp)$ over $\ell\in P(E)$, 
we assign $\psi(v)$.  It is tangent to the fiber of $S(E)$ over $\pi(\ell)\in X$ 
at $v\in S(E)=S(\eta)$ and $\psi(-v)=-\psi(v)$, so 
$\psi(v)$ defines an element of $\tau_f P(E)$ over $\ell$. 
This correspondence gives an isomorphism from $\Hom(\eta,\eta^\perp)$ 
to $\tau_fP(E)$. 

Thus we obtain 
\begin{equation*}
\tau_f P(E)\oplus \1 \cong \Hom(\eta,\eta^\perp)\oplus \Hom(\eta,\eta)
\cong \Hom(\eta,\pi^*(E)).
\end{equation*}
This implies the lemma because $\tau P(E)\cong \tau_f P(E)\oplus\pi^*(\tau X)$. 
\end{proof}

\begin{defi}
$k(a):=\#\{ n\in\N \mid 0<n< a \text{ and } n\equiv 0,1,2,4\pmod{8}\}.$
\end{defi}

For example, 
\[
\begin{split}
&k(1)=0,\ k(2)=1,\ k(3)=k(4)=2,\ k(5)=k(6)=k(7)=k(8)=3,\\ 
&k(9)=4,\ k(10)=5,\ k(11)=k(12)=6,\dots
\end{split}
\]
It is known that $\widetilde{KO}(\R P^{a-1})$ is a cyclic group of order 
$2^{k(a)}$ generated by $\gamma-\1$ (\cite[Theorem 7.4]{adam62}). 
This implies that $2^{k(a)}\gamma$ is trivial because the fiber dimension (that is 
$2^{k(a)}$) is strictly larger than the dimension of the base space (that is $a-1$). 

\begin{theo} \label{main2}
Let $0\le q,q'\le b$. Then 
$M(q)$ and $M(q')$ are diffeomorphic if and only if $q'\equiv q\text{ or }b-q \pmod{2^{k(a)}}$.
\end{theo}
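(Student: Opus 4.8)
The plan is to prove the two implications separately, the engine in both being the $KO$-theoretic fact quoted above that $\gamma-\1$ generates $\widetilde{KO}(\R P^{a-1})\cong\Z/2^{k(a)}$.

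For the \lq\lq if" part I would work at the level of vector bundles. Since $\gamma-\1$ has order $2^{k(a)}$, any multiple $2^{k(a)}m\,\gamma$ is trivial, so if $q'\equiv q\pmod{2^{k(a)}}$ with, say, $q'=q+2^{k(a)}m\le b$, then
\[
q'\gamma\oplus(b-q')\1\cong q\gamma\oplus 2^{k(a)}m\,\1\oplus(b-q')\1=q\gamma\oplus(b-q)\1.
\]
Thus the defining bundles of $M(q')$ and $M(q)$ are isomorphic, so the two manifolds are diffeomorphic; composing with the diffeomorphism $M(q)\cong M(b-q)$ of \eqref{qb} then also disposes of the congruence $q'\equiv b-q$.

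The substance is the \lq\lq only if" part, where the issue is that $\Z/2$-cohomology alone yields only the coarser congruence modulo $2^{h(a)}$ of Theorem~\ref{main}, so a finer invariant is required. I would use the stable tangent bundle as a diffeomorphism invariant in $\widetilde{KO}$. Writing $e=[\eta]-\1$ and $g=[\pi^*\gamma]-\1$ in $\widetilde{KO}(M(q))$, and combining $\tau\R P^{a-1}\oplus\1\cong a\gamma$ with Lemma~\ref{talong} (together with $\eta^*\cong\eta$ and $[\eta][\pi^*\gamma]=(\1+e)(\1+g)$), a short computation should give the reduced tangent class
\[
be+(a+q)g+q\,eg.
\]
Now suppose $f\colon M(q')\to M(q)$ is a diffeomorphism. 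By Theorem~\ref{main} and \eqref{qb} I may normalize so that the ring isomorphism $f^{*}$ carries $x\mapsto x'$ and $y\mapsto y'$; since real line bundles are classified by their first Stiefel--Whitney class, this upgrades the mod-$2$ matching to the exact $KO$-identities $f^{*}g=g'$ and $f^{*}e=e'$, where $g',e'$ are the analogous classes on $M(q')$.

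Finally, $df$ identifies $f^{*}\tau M(q)$ with $\tau M(q')$, so applying $f^{*}$ to the displayed class and equating it with the corresponding class for $q'$ gives $(q-q')\,g'(\1+e')=0$; as $\1+e'=[\eta']$ is a unit in $KO(M(q'))$ (because $[\eta']^{2}=[\1]$), this reduces to $(q-q')\,g'=0$. Since $M(q')\to\R P^{a-1}$ admits a section, the pullback is injective on $\widetilde{KO}$, so $g'$ has the same order $2^{k(a)}$ as $\gamma-\1$, whence $q\equiv q'\pmod{2^{k(a)}}$; the alternative normalization $y\mapsto x'+y'$ yields $q'\equiv b-q$ instead. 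I expect the one genuinely delicate point to be this passage from the cohomological data to the $KO$-level identities $f^{*}g=g'$, $f^{*}e=e'$: a priori a diffeomorphism could act uncontrollably on $\widetilde{KO}$, and it is precisely the rigidity of real line bundles under $w_1$ that tames $f^{*}$: once that is in hand, the tangent-class computation and the order count for $g'$ are routine.
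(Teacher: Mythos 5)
Your proof is correct and follows essentially the same route as the paper: the ``if'' part via the (actual) triviality of $2^{k(a)}\gamma$ over $\R P^{a-1}$, and the ``only if'' part via Lemma~\ref{talong}, the $w_1$-rigidity of real line bundles to control $f^*$ on $\eta$ and $\gamma$, and restriction along a section of $M(q)\to\R P^{a-1}$ to land in $\widetilde{KO}(\R P^{a-1})\cong\Z/2^{k(a)}$. The only cosmetic differences are that you expand the reduced tangent class explicitly as $be+(a+q)g+q\,eg$ and cancel the unit $[\eta']$, where the paper instead compares the bundle-level identity \eqref{f} directly and pushes it down by the cross-section.
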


\begin{proof}
We shall prove the \lq\lq if" part first.  If $2^{k(a)}\ge b$ (this is the case when $a\ge b$), 
then $q'=q$ or $b-q$ and hence $M(q)\cong M(q')$ by \eqref{qb}.  Suppose $2^{k(a)}< b$.  
Then $a<b$ so that the bundles $q'\gamma\oplus (b-q')\1$ 
and  $q\gamma\oplus (b-q)\1$ are in the stable range and 
these bundles are isomorphic because $\widetilde{KO}(\R P^{a-1})$ 
is a cyclic group of order $2^{k(a)}$ generated by $\gamma-\1$ 
and $q'\equiv q\pmod{2^{k(a)}}$.  Hence $M(q)\cong M(q')$. 

We shall prove the \lq\lq only if" part. 
Suppose $M(q)\cong M(q')$ and let $f\colon M(q)\to M(q')$ 
be a diffeomorphism.  Then 
\[
f^*(\tau M(q'))=\tau M(q)\quad\text{in }\widetilde{KO}(M(q)).
\]
Since $\tau(\R P^{a-1})\oplus\1\cong a\gamma$, 
it follows from Lemma~\ref{talong} that the identity above implies 
\begin{equation} \label{f}
\begin{split}
f^*\big(\Hom(\eta',& q'\gamma\oplus (b-q')\1)\oplus a\gamma\big)\\
&=\Hom(\eta,q\gamma\oplus (b-q)\1)\oplus a\gamma \quad\text{in }\widetilde{KO}(M(q))
\end{split}
\end{equation}
where $\eta$ and $\eta'$ denote the tautological line bundles over $M(q)$ and $M(q')$ 
respectively and $\gamma$ is regarded as a line bundle over $M(q)$ and $M(q')$ 
through the projections onto $\R P^{a-1}$. 

If both $q$ and $q'$ are in $\{0,b\}$, then 
the \lq\lq only if" part is obviously satisfied.   
Therefore we may assume that $0<q<b$.  Then $f^*(x')=x$ and 
$f^*(y')=y$ or $x+y$ by Lemma~\ref{1}.  Therefore $f^*(\gamma)=\gamma$ 
and $f^*(\eta')=\eta$ or $\gamma\eta$. 
Suppose $f^*(\eta')=\eta$ occurs.  
(When $f^*(\eta')=\gamma\eta$ occurs, the role of $q$ and $b-q$ 
will be interchanged.) 
Then \eqref{f} reduces to 
\[
\Hom(\eta,q'\gamma\oplus (b-q')\1)
=\Hom(\eta,q\gamma\oplus (b-q)\1) \quad\text{in } \widetilde{KO}(M(q)).
\]
The fibration $M(q)\to \R P^{a-1}$ has a cross-section and we send 
the identity above to $\widetilde{KO}(\R P^{a-1})$ through the cross-section. Then 
$\eta$ becomes trivial or $\gamma$ because a line bundle over $\R P^{a-1}$ 
is either trivial or $\gamma$.  
In any case, the identity above reduces to 
\begin{equation} \label{qq}
(q'-q)(\gamma-\1)=0\quad\text{in }\widetilde{KO}(\R P^{a-1})
\end{equation}
and this implies $q'\equiv q \pmod{2^{k(a)}}$. 
\end{proof}

One easily sees that $h(a)\le k(a)$ for any $a$ and 
the equality holds if and only if $a\le 9$. 
Corollary~\ref{coro1} can be improved as follows. 

\begin{theo}
Cohomological rigidity over $\Z/2$ holds for $M(q)$'s if and only if 
$a\le 9$ or $b\le 2^{h(a)}$.   
\end{theo}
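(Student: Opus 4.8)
The plan is to convert the statement into a purely number-theoretic comparison of the two criteria already established. Write $H:=2^{h(a)}$ and $K:=2^{k(a)}$. By Theorem~\ref{main}, $H^*(M(q);\Z/2)\cong H^*(M(q');\Z/2)$ exactly when $q'\equiv q$ or $b-q\pmod H$, and by Theorem~\ref{main2}, $M(q)\cong M(q')$ exactly when $q'\equiv q$ or $b-q\pmod K$. Since $h(a)\le k(a)$ always, $H$ divides $K$, so the mod-$K$ relation implies the mod-$H$ relation; hence cohomological rigidity over $\Z/2$ for the $M(q)$'s holds precisely when these two relations coincide on $\{0,1,\dots,b\}$.

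For the \lq\lq if" part I would treat the two hypotheses separately. If $a\le 9$, then by the observation just before the theorem $h(a)=k(a)$, so $H=K$ and the two congruence conditions are identical; rigidity is immediate. If $b\le H$, this is exactly Corollary~\ref{coro1}.

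For the \lq\lq only if" part I would argue by contraposition: assuming $a>9$ and $b>H$, I would produce a pair $M(q),M(q')$ with isomorphic $\Z/2$-cohomology that are not diffeomorphic. Since $a>9$ forces $h(a)<k(a)$, we have $0<H<K$. For $c\in\{1,2\}$ put $q=c$ and $q'=c+H$; then $q'\equiv q\pmod H$, so Theorem~\ref{main} yields a cohomology isomorphism as soon as $q'\le b$. On the diffeomorphism side, $q'-q=H\not\equiv 0\pmod K$ because $H<K$, so by Theorem~\ref{main2} these manifolds can be diffeomorphic only through the alternative $q'\equiv b-q\pmod K$, i.e.\ only if $b\equiv 2c+H\pmod K$. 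The crux is that the two obstructing congruences $b\equiv 2+H$ (for $c=1$) and $b\equiv 4+H$ (for $c=2$) cannot both hold, since that would force $2\equiv 0\pmod K$, which is false because $K\ge 32>2$. Therefore at least one choice $c\in\{1,2\}$ gives a genuine counterexample.

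The only remaining bookkeeping is the range condition $q'=c+H\le b$, and this is where I expect the single real subtlety. If $b\ge H+2$ both $c=1,2$ are admissible, so the dichotomy above applies verbatim. In the boundary case $b=H+1$ (the only other possibility under $b>H$) only $c=1$ is admissible, but there $q'=H+1=b$ and $b=H+1\not\equiv H+2\pmod K$, so $M(1)$ and $M(H+1)$ already fail to be diffeomorphic while sharing the same $\Z/2$-cohomology. In every case rigidity fails, which finishes the contrapositive and hence the theorem. The main obstacle is thus not the inequality $h(a)<k(a)$ itself but the $q\leftrightarrow b-q$ symmetry in the diffeomorphism criterion: a single candidate pair need not evade the spurious congruence $q'\equiv b-q\pmod K$, which is precisely why one tests the two nearby pairs $c=1,2$ and exploits that $K$ does not divide $2$.
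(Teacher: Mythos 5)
Your proof is correct and follows the same overall skeleton as the paper: the \lq\lq if" direction via $h(a)=k(a)$ for $a\le 9$ together with Corollary~\ref{coro1}, and the \lq\lq only if" direction by exhibiting explicit pairs $(q,q')$ that satisfy the mod-$2^{h(a)}$ congruence of Theorem~\ref{main} but not the mod-$2^{k(a)}$ condition of Theorem~\ref{main2}. The only genuine difference is how the counterexample pair is chosen to evade the alternative congruence $q'\equiv b-q\pmod{2^{k(a)}}$: the paper cases on whether $b$ is a multiple of $2^{h(a)}$ and takes $(1,2^{h(a)}+1)$ or $(0,2^{h(a)})$ accordingly, whereas you run a small pigeonhole over the two candidates $(c,c+2^{h(a)})$, $c\in\{1,2\}$, noting that the two obstructing congruences $b\equiv 2^{h(a)}+2$ and $b\equiv 2^{h(a)}+4$ modulo $2^{k(a)}$ are mutually exclusive since $2^{k(a)}>2$. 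Both verifications are equally short; your version has the mild advantage of not needing any divisibility hypothesis on $b$, at the cost of the boundary check $b=2^{h(a)}+1$, which you handle correctly.
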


\begin{proof} 
If $a\le 9$, then $h(a)=k(a)$. So the \lq\lq if" part follows from 
Theorems~\ref{main} and ~\ref{main2} when $a\le 9$ and from 
Corollary~\ref{coro1} when $b\le 2^{h(a)}$. 

Suppose $a\ge 10$ (so $k(a)>h(a)\ge 4$) and $b>2^{h(a)}$. 
Then we take 
\[
(q,q')=\begin{cases} (1,2^{h(a)}+1) \quad&\text{when $b$ is a multiple of $2^{h(a)}$},\\
(0,2^{h(a)}) \quad&\text{when $b$ is not a multiple of $2^{h(a)}$}.
\end{cases}
\]
In both cases above, $q'\equiv q\pmod{2^{h(a)}}$ but 
$q'$ is not congruent to neither $q$ nor $b-q$ modulo $2^{k(a)}$ since 
$k(a)>h(a)\ge 4$. Therefore $M(q)$ and $M(q')$ are not diffeomorphic 
by Theorem~\ref{main2} while they have isomorphic cohomology 
rings with $\Z/2$ coefficients by Theorem~\ref{main}. 
\end{proof}

\section{Homotopical rigidity}

Cohomological rigidity over $\Z/2$ does not hold for $M(q)$'s in general, 
but the following holds. 

\begin{theo} \label{homot}
If $M(q)$ and $M(q')$ are homotopy equivalent, then they are 
diffeomorphic. 
\end{theo}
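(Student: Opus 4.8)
The plan is to run the tangent-bundle argument of Theorem~\ref{main2} verbatim, but with the actual tangent bundle replaced by its stable fiber homotopy type, which \emph{is} a homotopy invariant even though the tangent bundle itself is not. First I would dispose of the trivial cases: if $q,q'\in\{0,b\}$ then both manifolds are the product $\R P^{a-1}\times\R P^{b-1}$ and there is nothing to prove, so I may assume $0<q<b$. A homotopy equivalence $f\colon M(q)\to M(q')$ induces a graded ring isomorphism on $\Z/2$-cohomology, so Lemma~\ref{1} forces $f^*(x')=x$ and $f^*(y')=y$ or $x+y$, exactly as in the proof of Theorem~\ref{main2}. Since a real line bundle is classified by its first Stiefel--Whitney class and $H^1(-;\Z/2)$ is a homotopy invariant, this upgrades to $f^*\gamma'=\gamma$ and $f^*\eta'=\eta$ or $\gamma\eta$ at the level of bundles, hence in $\widetilde{KO}$.

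The crucial homotopy-theoretic input is that the stable fiber homotopy type of the tangent bundle is a homotopy invariant of a closed manifold: the Spivak normal fibration is carried to that of a homotopy equivalent manifold, and the stable tangent bundle is the negative of the stable normal bundle. Writing $J\colon\widetilde{KO}(-)\to J(-)$ for the natural projection onto stable fiber homotopy classes, this gives $f^*J(\tau M(q'))=J(\tau M(q))$, the homotopical surrogate for the equality of tangent bundles used to obtain \eqref{f}. Feeding in Lemma~\ref{talong} together with $\tau(\R P^{a-1})\oplus\1\cong a\gamma$, and substituting $f^*\eta'=\eta$, I would reduce this to
\[
(q'-q)\,J\bigl(\eta(\gamma-\1)\bigr)=0\quad\text{in }J(M(q)),
\]
the alternative $f^*\eta'=\gamma\eta$ producing the same equation with $q$ replaced by $b-q$.

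Next I would exploit the cross-section of $M(q)\to\R P^{a-1}$ exactly as in Theorem~\ref{main2}: restriction along it sends $\eta$ to $\1$ or $\gamma$, so $\eta(\gamma-\1)$ restricts to $\pm(\gamma-\1)$, and by naturality of $J$ the displayed identity pushes forward to $(q'-q)\,J(\gamma-\1)=0$ in $J(\R P^{a-1})$. Everything then comes down to the order of $J(\gamma-\1)$, and this is the hard part and the one ingredient not already in the paper. I need the real $J$-homomorphism $J\colon\widetilde{KO}(\R P^{a-1})\to J(\R P^{a-1})$ to be \emph{injective}, so that $J(\gamma-\1)$ has order exactly $2^{k(a)}$ and not some smaller power of $2$; equivalently, that $m\gamma$ is stably fiber homotopy trivial over $\R P^{a-1}$ only when $2^{k(a)}\mid m$. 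This is Adams' determination of the $J$-groups of real projective spaces, where the $e$-invariant is shown to detect all of $\widetilde{KO}(\R P^{a-1})$, and it is precisely the point where fiber homotopy information becomes as sharp as genuine vector bundle information.

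Granting this injectivity, the equation $(q'-q)J(\gamma-\1)=0$ forces $q'\equiv q\pmod{2^{k(a)}}$, while the parallel case gives $q'\equiv b-q\pmod{2^{k(a)}}$. In either case Theorem~\ref{main2} yields that $M(q)$ and $M(q')$ are diffeomorphic, completing the argument. I expect the two algebraic reductions (the passage to the displayed $J$-identity and the cross-section restriction) to be routine once the fiber homotopy invariance of $J(\tau M(q))$ is in place, with the entire weight of the theorem resting on the injectivity of the real $J$-homomorphism for $\R P^{a-1}$.
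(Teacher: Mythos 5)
Your proposal is correct and follows essentially the same route as the paper: the homotopy invariance of $J(\tau M)$ (Atiyah's theorem on Thom complexes), the reduction via Lemma~\ref{talong} and the cross-section exactly as in Theorem~\ref{main2}, and the injectivity (in fact bijectivity) of $J\colon\widetilde{KO}(\R P^{a-1})\to J(\R P^{a-1})$ due to Adams to recover \eqref{qq} and conclude via Theorem~\ref{main2}. The only cosmetic difference is that you subtract the two $\Hom$ terms before restricting along the cross-section, whereas the paper restricts first; the content is identical.
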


\begin{proof}
For a finite CW complex $X$, $J(X)$ denotes the $J$ group of $X$ and 
$J\colon \widetilde{KO}(X)\to J(X)$ denotes the $J$ homomorphism.  
Let $f\colon M(q)\to M(q')$ be a homotopy equivalence. Then 
\[
J(f^*(\tau M(q')))=J(\tau M(q))\quad\text{in $J(M(q))$}
\]  
by a theorem of Atiyah (\cite[Theorem 3.6]{atiy61}).  
The same argument as in the latter part of the proof of Theorem~\ref{main2} 
shows that we may assume that $0<q<b$ and $f^*(\eta')=\eta$, and then 
\[
J\big((q'-q)(\gamma-\1)\big)=0\quad\text{in }J(\R P^{a-1}).
\]
This implies \eqref{qq} because 
$J\colon \widetilde{KO}(\R P^{a-1})\to J(\R P^{a-1})$ is 
an isomorphism (see \cite[Theorem 13.9]{huse93}).  
Hence $M(q)$ and $M(q')$ are diffeomorphic. 
\end{proof}

Theorem~\ref{homot} motivates us to ask whether two real toric manifolds are 
diffeomorphic (or homeomorphic)  if they are homotopy equivalent, which we may call 
\emph{homotopical rigidity problem for real toric manifolds}. 



\begin{thebibliography}{19}

\bibitem{adam62}
J. F. Adams,
\emph{Vector fields on spheres},
Ann. of Math. 75 (1962), 603--632. 

\bibitem{atiy61}
M. F. Atiyah,
\emph{Thom complexes},
Proc. London Math. Soc. (3) 11 (1961), 291--310.

\bibitem{ch-ma-su08-2}
S. Choi, M. Masuda and D. Y. Suh,
\emph{Topological classification of generalized Bott towers},
preprint, arXiv:0807.4334.




\bibitem{huse93}
D. Husemoller,
\emph{Fiber Bundles}, Third Edition, 
Graduate Texts in Math. 20, Springer-Verlag 1993.  


\bibitem{ka-ma08}
Y. Kamishima and M. Masuda,
\emph{Cohomological rigidity of real Bott manifolds},
preprint, arXiv:0807.4263. 

\bibitem{masu08}
M. Masuda,
\emph{Classification of real Bott manifolds},
preprint, arXiv:0809.2178. 

\bibitem{ma-su08}
M. Masuda and D. Y. Suh, 
\emph{Classification problems of toric manifolds via topology},
Proc. of Toric Topology, Contemp. Math. 460 (2008), 273--286, 
arXiv:0709.4579. 

\bibitem{mi-st74}
J. W. Milnor and J. D. Stasheff, 
\emph{Characteristic Classes}, 
Ann. of Math. Studies 76, Princeton Univ. Press 1974. 




\end{thebibliography}
\end{document}